\documentclass{article}


\usepackage[latin1]{inputenc}  
\usepackage[T1]{fontenc}
\usepackage{amsmath,amssymb,dcolumn,amsthm}
\usepackage{graphicx,color,transparent}

\bibliographystyle{IEEEtran}

\usepackage{algorithm}
\usepackage{algorithmic}

\graphicspath{{./figs/}}
\usepackage{float}
\setcounter{MaxMatrixCols}{12}

\makeatletter
\renewcommand{\maketag@@@}[1]{\hbox{\m@th\normalsize\normalfont#1}}%
\makeatother

\usepackage{tikz}
\usetikzlibrary{arrows}
\tikzstyle{block} = [draw, draw=black, line width = 1pt, rectangle,
    rounded corners=4pt,
    minimum height=3em, text width=.7\columnwidth,
    inner sep = 10pt]

\usepackage{ian_misc,ian_math,ian_opt,ian_abbrvs,ian_color,ian_par_mpc,ian_par_ric,ian_par_mhe,axeIEEEarticle}

\usepackage{ragged2e}
\usepackage{cite}

\setlength{\textfloatsep}{4pt}

\title{\LARGE \bf An $\Ordo{\log{N}}$ Parallel Algorithm for Newton Step Computations\\ with Applications to Moving Horizon Estimation}

\author{Isak Nielsen and Daniel Axehill%
  \thanks{I. Nielsen and D. Axehill are with the Division of Automatic
    Control, Linköping University, SE-58183 Linköping, Sweden, \texttt{isak.nielsen@liu.se, daniel.axehill@liu.se}.}}
    
%
\begin{document}
\maketitle
\thispagestyle{empty}
\pagestyle{empty}

\begin{abstract}
In Moving Horizon Estimation (\MHE) the computed estimate is found by solving a constrained finite-time optimal estimation problem in real-time at each sample in a receding horizon fashion. The constrained estimation problem can be solved by, \eg, interior-point (\IP) or active-set (\AS) methods, where the main computational effort in both methods is known to be the computation of the search direction, \ie, the Newton step. This is often done using generic sparsity exploiting algorithms or serial Riccati recursions, but as parallel hardware is becoming more commonly available the need for parallel algorithms for computing the Newton step is increasing. In this paper a tailored, non-iterative parallel algorithm for computing the Newton step using the Riccati recursion is presented. The algorithm exploits the special structure of the Karush-Kuhn-Tucker system for the optimal estimation problem. As a result it is possible to obtain logarithmic complexity growth in the estimation horizon length, which can be used to reduce the computation time for \IP and \AS methods when applied to what is today considered as challenging estimation problems. Promising numerical results have been obtained using an \ansic implementation of the proposed algorithm running on true parallel hardware. Beyond \MHE, due to similarities in the problem structure, the algorithm can be applied to various forms of on-line and off-line smoothing problems.
\end{abstract}

%

\newpage
\section{Introduction}
\label{sec:intro}
One of the most widely used advanced control strategies in industry
today is Model Predictive Control (\MPC). In each sample, the \MPC
strategy requires the solution of a constrained finite-time optimal
control (\cftoc) problem on-line, which creates a need for efficient
optimization algorithms. It is well-known that the resulting
optimization problem obtains a special structure that can be exploited
to obtain high-performance linear algebra for computing Newton steps
in various setups, see \eg
\cite{jonson83:thesis,rao98:_applic_inter_point_method_model_predic_contr,hansson00:_primal_dual_inter_point_method,bartlett02:_quadr,vandenberghe02:_robus_full,akerblad04:_effic,axehill06:_mixed_integ_dual_quadr_progr_tail_mpc,axevanhan07:_relax_applic_mipc_compa_and_eff_compu,axehill08:thesis,axehill08:_dual_gradien_projec_quadr_progr,diehl09:_nonlin_model_predic_contr,domahidi12:_effic_method_multis_probl_arisin}. A
problem which turns out to have similar problem structure is the
Moving Horizon Estimation (\MHE) problem,
\cite{rao00:thesis,jorgensen04:thesis}. In \MHE, the state-estimate is
again obtained as the solution to a highly structured optimization
problem solved on-line in a receding horizon fashion. In the same
spirit as \MPC adds the possibility for optimal control under
constraints, \MHE adds the possibility for optimal estimation under
constraints. It has been shown that problem structure can be exploited
also for this application
in~\cite{rao00:thesis,jorgensen04:thesis,haverbeke09:structure_expl_ip_for_mhe}. The
optimization problem that is solved on-line in \MHE can be shown to
have a similar structure to the one in so-called smoothing,
\cite{rao00:thesis,kailath2000linear}. In smoothing,
measurements are available along the entire time window of estimation,
which means that non-causal estimation can be performed.
From a computational point of view, \MHE can be interpreted as
repeatedly solving smoothing problems in a receding horizon fashion
and only the last state estimate is actually returned as an estimate
(analogously to that only the first computed control signal is applied in
\MPC). Depending on the type of system and problem formulation, the
\MHE problem can be of different types. \MHE can be applied to linear,
nonlinear or hybrid systems. Often, the computational effort spent
when solving the resulting optimization problem boils down to solving
Newton-system-like equations that can be associated with an unconstrained
finite-time optimal control (\uftoc) problem,
\cite{jorgensen04:thesis}.

In recent years, the need for parallel algorithms for solving control and estimation
problems has increased. While much effort in research has been spent on
this topic for \MPC,~\cite{constantinides2009tutorial}, parallelism for estimation is a less explored field. For the \MPC application, an extended Parallel Cyclic
Reduction algorithm is introduced in~\cite{soudbakhsh2013parallelized} which is used to reduce the computations to smaller systems of equations that are solved in
parallel. The computational complexity of this algorithm is reported
to be $\Ordo{\log N}$, where $N$ is the prediction horizon. In
\cite{ZhuParallelNP} and \cite{reuterswardLic} a time-splitting
approach to split the prediction horizon into blocks is adopted. The
subproblems are solved in
parallel using Schur complements, and the common variables are computed
by solving a dense system of equations serially. In~\cite{o2012splitting} a
splitting method based on Alternating Direction Method of Multipliers
(\admm) is used, where some steps of the algorithm can be computed in
parallel. In \cite{stathopoulos2013hierarchical} an iterative
three-set splitting quadratic programming (\QP) solver is
developed. In this method several simpler subproblems are computed in
parallel and a consensus step using \admm is performed to obtain the
final
solution. A parallel coordinate descent method for solving \MHE problems is proposed in~\cite{poloni13:par_mhe}. In~\cite{nielsen14:parallel_mpc_ifac,nielsen14_parallel_mpc_arxiv}
a tailored algorithm for solving the Newton step directly
(non-iteratively) in parallel for \MPC is presented. In that work
several subproblems are solved parametrically in parallel by
introducing constraints on the terminal states, but the structure
is not exploited when the subproblems are
solved. In~\cite{nielsen15:parallel_factorization_unp}, it is shown
how the Newton step can be computed in parallel while simultaneously
exploiting problem
structure. In~\cite{sina15:distributed_message_passing} a generic
message-passing parallel algorithm for distributed optimization with
applications to, \eg, control and estimation, is presented.

The main contribution in this paper is to extend the use of the
parallel structure exploiting numerical algorithms for computing
Newton steps for \MPC presented in
\cite{nielsen15:parallel_factorization_unp,nielsen15:licthesis} to the
\MHE and smoothing problems. Furthermore, the performance of the
algorithm when solving \MHE problems is illustrated using an \ansic implementation of the
proposed algorithm that is executed truly in parallel on a physical
computer cluster. The proposed algorithm can replace
existing serial algorithms at the computationally most demanding step
when solving various forms of finite horizon optimal estimation
problems in practice.  The algorithm is tailored for \MHE problems and
exploit the special structure of the \KKT system for such
problems. The classical serial Riccati method exploits the causality
of the problem and for that reason it is not obvious that it can be
split and parallelized in time, especially without involving an
iterative consensus step. The main idea is to exploit the problem
structure in time and divide the \MHE problem in smaller subproblems
along the prediction horizon. Consensus is reached directly
(non-iteratively) by solving a master problem. This overall structure
is similar to what is done
in~\cite{nielsen15:parallel_factorization_unp}, but the result is here
extended to the \MHE problem. A more detailed presentation of the work
in~\cite{nielsen14:parallel_mpc_ifac}
and~\cite{nielsen15:parallel_factorization_unp} is given
in~\cite{nielsen15:licthesis}. 

In this paper $\posdefmats^n$ ($\possemidefmats^n$) denotes symmetric
positive (semi) definite matrices with $n$ columns, $\intset{i}{j}
\triangleq \braces{i,i+1,\hdots,j}$ and symbols in sans-serif font
(\eg $\timestack{x}$) denote vectors or matrices of stacked
components. Furthermore, $I$ denotes the identity matrix of appropriate
dimension, and the product operator is defined as
\begin{equation}
\prod_{t=t_1}^{t_2} A_t \triangleq \begin{cases} 
A_{t_2} \cdots A_{t_1}, \; t_1 \leq t_2 \\
I, \; t_1 > t_2.
\end{cases}
\end{equation}



\section{Problem Formulation}
\label{sec:prob_form}

The \MHE problem is solved by solving the corresponding inequality constrained optimization problem. This can be done using different types of methods, where some common ones are primal and primal-dual interior-point (\IP) methods and active-set (\AS) methods. In these types of methods the main computational effort is spent when computing the search directions,~\cite{boyd04:_convex_optim,nocedal06:num_opt}, which is interpreted as solving a sequence of equality constrained \QP problems,~\cite{rao98:_applic_inter_point_method_model_predic_contr,jorgensen04:thesis,haverbeke09:structure_expl_ip_for_mhe}. The equality constrained convex \QP problem has the structure
\begin{equation}
\minimizes{&\frac{1}{2} \parens{x_0 - \xtil{0}}^T \inv{\Ptil{0}} \parens{x_0-\xtil{0}}+ \\ & \frac{1}{2} \sum_{k=0}^{N^{\textrm{mhe}}} \begin{bmatrix}
\w_k - \wtil_k \\ \vv_k - \vtil_k
\end{bmatrix}^T \inv{ \begin{bmatrix}
\Qwtil{k} & \! \Qwvtil{k} \\ \Qwvtil{k}^T & \! \Qvtil{k}
\end{bmatrix}  }\begin{bmatrix}
\w_k - \wtil_k \\ \vv_k - \vtil_k
\end{bmatrix} }{ \timestack{x},\timestack{w},\timestack{v}}{&x_{k+1} = A_k x_k + B_k \w_k + a_k, \; k \in \intset{0}{N^{\textrm{mhe}}} \\ &y_k = C_k x_k + \vv_k + d_k, \; k \in \intset{0}{N^{\textrm{mhe}}},} \label{eq:org_mhe}
\end{equation}
where $x_k \in \realnums{\nx}$ is the state, $\w_k \in \realnums{\nww} $ is the process noise, $\vv_k \in \realnums{\ny}$ is the sensor noise and $y_k \in \realnums{\ny}$ is the measured output,~\cite{jorgensen04:thesis}. $\xtil{0}$ and $\Ptil{0}$ are the initial state estimate and covariance matrix, respectively, and $\wtil_{k}$ and $\vtil_{k}$ are the nominal values for $\w_k$ and $\vv_k$, respectively. Here, the problem~\eqref{eq:org_mhe} is considered to be a deterministic optimization problem. However, a stochastic interpretation of this problem is found in, \eg,~\cite{kailath2000linear}. It is shown in, \eg{},~\cite{jorgensen04:thesis} that the \QP problem~\eqref{eq:org_mhe} can equivalently be written in the form of a \uftoc problem. This is done by eliminating the variable $\vv_k$ from the objective function using the measurement equation, and by defining a new state variable $x_{-1} \triangleq \xtil{0}$ and its corresponding process noise $\w_{-1} \triangleq x_0 - \xtil{0}$, which gives the relation $x_0 = x_{-1} + \w_{-1}$. Furthermore, by shifting time-indices by introducing $t \triangleq k+1$ and $N \triangleq N^{\textrm{mhe}}+2$, the problem~\eqref{eq:org_mhe} can equivalently be written in the form of a \uftoc problem, which here will be denoted $\MPCprob{N}$, \ie{},
\begin{equation}
 \label{eq:org_eqc_problem} \minimizes{
    &\sum^{N-1}_{t=0}\left(\frac{1}{2}\begin{bmatrix}
    x_t \\ \w_t
    \end{bmatrix}^T
    Q_t\begin{bmatrix}
    x_t \\ \w_t
\end{bmatrix} +  l_t^T \begin{bmatrix}
x_t \\ \w_t
\end{bmatrix} + c_t \right) \\
     &+\frac{1}{2}x^T_NQ_{x,N}x_N + l_N^Tx_N + c_N}
  {\timestack{x},\timestack{\w}}
  {&x_0 = \xtil{0} \\
    &x_{t+1} = A_tx_t + B_t\w_t + a_t, \; t \in \intset{0}{N-1}.} 
\end{equation}
For $t=0$ and $t=N$, the problem matrices are given by
\begin{subequations}
\begin{align}
&Q_0 = \begin{bmatrix}
Q_{x,0} & Q_{xw,0} \\ Q_{xw,0}^T & Q_{w,0}
\end{bmatrix} \triangleq \begin{bmatrix}
0 & 0 \\ 0 & \inv{\Ptil{0}}
\end{bmatrix}, \; l_0 \triangleq 0, \; c_0 \triangleq 0, \\& A_0 \triangleq I, \; B_0 \triangleq I, a_0 \triangleq 0, \; Q_{x,N} \triangleq 0, \; \lin{x,N} \triangleq 0, \; c_N \triangleq 0.
\end{align}
\end{subequations}
By defining $\ytil{t} \triangleq y_t - d_t - \vtil_{t}$ and
\begin{equation}
\begin{bmatrix}
W_t & S_t \\ S_t^T & V_t
\end{bmatrix} \triangleq \inv{ \begin{bmatrix}
\Qwtil{k} & \Qwvtil{k} \\ \Qwvtil{k}^T & \Qvtil{k}
\end{bmatrix}  },
\end{equation}
the problem matrices for $t \in \intset{1}{N-1}$ are given by
\begin{subequations}
\begin{align}
&Q_t = \begin{bmatrix}
Q_{x,t} & Q_{xw,t} \\ Q_{xw,t}^T & Q_{w,t}
\end{bmatrix} \triangleq \begin{bmatrix}
C_t^T V_t C_t & - C_t^T S_t \\ -S_t^T C_t & W_t
\end{bmatrix}\in \possemidefmats^{\nx+\nw}, \\& l_t \triangleq \begin{bmatrix}
\lin{x,t}  \\ \lin{\w,t}
\end{bmatrix}  = \begin{bmatrix}
C_t^T \parens{S_t \wtil_t - V_t \ytil{t} } \\ S_t^T \ytil{t} - W_t \wtil_{t}
\end{bmatrix} , \\& c_t \triangleq \frac{1}{2}  \wtil_{t}^T W_t \wtil_{t} -  \ytil{t}^T S_t \wtil_{t} + \frac{1}{2}\ytil{t}^T V_t \ytil{t}.
\end{align}
\end{subequations}

For the derivation of the problem matrices, see, \eg,~\cite{jorgensen04:thesis}.


\begin{remark}
In both \IP and \AS methods the solution to the original constrained \MHE problem is obtained by solving a sequence of \uftoc problems in the form in~\eqref{eq:org_eqc_problem}. The number of problems in this sequence is independent of how these \uftoc problems are solved. 
Since the main computation time is consumed when the \uftoc problems
are solved, the overall relative performance gain for solving the
entire sequence of problems in order to solve the constrained \MHE problem is roughly the same as the relative performance gain obtained when solving a single \uftoc problem.
\end{remark}


\section{Serial Riccati Recursion}
\label{sec:ric_rec}
The optimal solution to the \uftoc problem~\eqref{eq:org_eqc_problem} is computed by solving the set of linear equations given by the associated Karush-Kuhn-Tucker (\KKT) system. For this problem structure, the \KKT system has a very special form that is almost block diagonal and it is well known that it can be factored efficiently using a Riccati factorization~\cite{axehill08:thesis}. The Riccati factorization is used to factor the \KKT coefficient matrix, followed by backward and forward recursions to compute the primal and dual variables. The computational complexity when solving the \KKT system using the Riccati recursion is reduced from roughly $\Ordo{N^2}-\Ordo{N^3}$ to $\Ordo{N}$ compared to solvers that do not exploit sparsity. 
The Riccati recursion is given by algorithms~\ref{alg:factorization}-\ref{alg:fwd_rec}, where $F_t,P_t\in\possemidefmats^{n_x}$, $G_t\in\posdefmats^{\nw}$, ${H_t\in\realnums{n_x\times \nw}}$ and ${K_t\in\realnums{\nw \times
  n_x}}$,~\cite{axehill08:thesis}.
For more background information on Riccati factorizations, see, e.g.,~\cite{jonson83:thesis},~\cite{rao98:_applic_inter_point_method_model_predic_contr} or~\cite{axehill08:thesis}.
%
%
\begin{algorithm}[h!]
  \caption{Riccati factorization} \label{alg:factorization}
  \begin{algorithmic}[1]
    \STATE $P_N := Q_{x,N}$
    \FOR{$t=N-1,\ldots,0$}
    \STATE $F_{t+1} := Q_{x,t} + A^T_tP_{t+1}A_t$\label{alg:factorization:line:F} \\
    \STATE $G_{t+1} := Q_{\w,t} + B^T_tP_{t+1}B_t$ \label{alg:factorization:line:G}\\
    \STATE $H_{t+1} := Q_{x\w,t} + A^T_tP_{t+1}B_t$\label{alg:factorization:line:H} \\
    \STATE Compute and store a factorization of $G_{t+1}$.\label{alg:factorization:line:factorize_G}
    \STATE Compute a solution $K_{t+1}$ to \\ 
    $G_{t+1}K_{t+1} = -H^T_{t+1}$ \label{alg:factorization:line:GK}\\
    \STATE $P_{t} := F_{t+1} - K^T_{t+1}G_{t+1}K_{t+1}$\label{alg:factorization:line:P}
    \ENDFOR
  \end{algorithmic}
\end{algorithm}
\begin{algorithm}[tbp!]
  \caption{Backward recursion}
  \label{alg:bwd_rec}
  \begin{algorithmic}[1]
    \STATE $\Psi_N := -\lin{x,N}, \; \cb{N} := c_N$
    \FOR{$t = N-1,\hdots,0$}
    \STATE Compute a solution $k_{t+1}$ to \\
    	$G_{t+1}k_{t+1} = \parens{B^T_t\Psi_{t+1} -
        \lin{\w,t} - B_t^T P_{t+1}a_t}$ \label{alg:bwd_rec:line:Gk}
    \STATE $\Psi_t := A^T_t\Psi_{t+1}
    - H_{t+1}k_{t+1} - \lin{x,t} - A_t^T P_{t+1}a_t$ \\
    \STATE $\cb{t} := \cb{t+1} + \frac{1}{2}a_t^TP_{t+1}a_t - \Psi_{t+1}^Ta_t$ \\ \hspace{4ex} $- \frac{1}{2}k_{t+1}^TG_{t+1}k_{t+1} + c_t$
    \ENDFOR
  \end{algorithmic}
\end{algorithm}
\begin{algorithm}[htbp!]
  \caption{Forward recursion}
  \label{alg:fwd_rec}
  \begin{algorithmic}[1]
  	\STATE $x_0 := \xtil{0}$
    \FOR{$t = 0,\hdots,N-1$}
	\STATE $\w_t := k_{t+1} + K_{t+1}x_t$
    \STATE $x_{t+1} := A_tx_t +
    B_t \w_t + a_t$
    \STATE $\lambda_t := P_tx_t -\Psi_t$ 
    \ENDFOR
    \STATE $\lambda_N := P_Nx_N-\Psi_N$
  \end{algorithmic}
\end{algorithm}



\section{Problem Decomposition and Reduction}
\label{sec:prob_red}

By examining algorithms~\ref{alg:factorization} and~\ref{alg:bwd_rec}, it can be seen that given $P_{t_{i+1}}$, $\Psi_{t_{i+1}}$ and $\cb{t_{i+1}}$ the factorization and backward recursion can be computed for $0 \leq t \leq  t_{i+1} $. Furthermore, if these algorithms have been executed, it follows from Algorithm~\ref{alg:fwd_rec} that given $x_{t_i}$ the forward recursion can be computed for the interval ${t_i \leq t \leq t_{i+1}}$. Hence, provided that $P_{t_{i+1}}$, $\Psi_{t_{i+1}}$, $\cb{t_{i+1}}$ and $x_{t_i}$ are known for $i \in \intset{0}{p}$ for some $p$, it is possible to compute the Riccati recursion and the primal and dual solution in each interval ${t_{i} \leq t \leq t_{i+1}}$ with $i \in \intset{0}{p}$ independently from the other intervals. This property will be used to decompose the problem.

The decomposition of the time-horizon is similar to what is done in partial condensing, which is introduced in~\cite{axehill15:_contr_mpc}. In partial condensing, state variables in several batches along the prediction horizon are eliminated, and the resulting control problem can be interpreted as a problem with shorter prediction horizon but larger control input/process noise dimension. In the parallel approach in this paper, due to the property described above, the partial condensing of the independent batches can be performed in parallel. Furthermore, by utilizing the problem structure in the batches it is possible to also reduce the control input/process noise dimension.


%


\subsection{Divide into independent intervals}
\label{subsec:sub_prob}


Decompose the \uftoc problem~\eqref{eq:org_eqc_problem} by dividing the prediction horizon into $\p+1$ intervals, or {batches}. This is done by introducing the batch-wise variables $\timestack{x_i}$ and $\timestack{\w_i}$ as
\begin{align}
\timestack{x_i} &= \begin{bmatrix}
x_{0,i}^T & \cdots & x_{N_i,i}^T
\end{bmatrix}^T \triangleq \begin{bmatrix}
x_{t_{i}}^T & \cdots & x_{t_{i+1}}^T
\end{bmatrix}^T, \\
\timestack{\w_i} &= \begin{bmatrix}
\w_{0,i}^T & \cdots & \w_{N_i-1,i}^T
\end{bmatrix}^T \triangleq \begin{bmatrix}
\w_{t_{i}}^T & \cdots & \w_{t_{i+1}-1}^T
\end{bmatrix}^T, 
\end{align}
where $N_i$ is the length of batch $i$, $t_0 = 0$ and $x_{N_i,i} = x_{0,i+1}$. 

By following the reasoning in the introduction of this section it is possible to compute the Riccati recursion and the optimal value in batch $i$ if ${\xh{i} \triangleq x_{\tint{i}}}$, $\Prh{i+1} \triangleq P_{\tint{i+1}}$, $\Psirh{i+1} \triangleq \Psi_{\tint{i+1}}$ and $\crh{i+1} \triangleq \cb{\tint{i+1}}$ are known. Hence, if these variables are known for all batches $i \in \intset{0}{p}$, the solution to the original problem~\eqref{eq:org_eqc_problem} can be computed from $p+1$ independent subproblems in the \uftoc form

\begin{equation}
\minimizes{&\sum_{t=0}^{N_i-1}\left(\frac{1}{2}\begin{bmatrix}
    x_{t,i} \\ \w_{t,i}
    \end{bmatrix}^T Q_{t,i} \begin{bmatrix}
    x_{t,i} \\  \w_{t,i}
\end{bmatrix} + \lin{t,i}^T \begin{bmatrix}
    x_{t,i} \\ \w_{t,i}
\end{bmatrix} + c_{t,i}\right) \\&+\frac{1}{2}x_{N_i,i}^T\Prh{i+1}x_{N_i,i} - \Psirh{i+1}^T x_{N_i,i} + \crh{i+1}}{\timestack{x_i},\timestack{u_i}}{x_{0,i} &= \xh{i} \\ x_{t+1,i}&=A_{t,i}x_{t,i}+B_{t,i}\w_{t,i}+a_{t,i}, \; t\in \intset{0}{N_i-1},} \label{eq:sub_problem_dscr}
\end{equation}
using $p+1$ individual Riccati recursions. Here $ Q_{t,i}$, $\lin{t,i}$, $c_{t,i}$, $A_{t,i}$, $B_{t,i}$ and $a_{t,i}$ are defined consistently with $\timestack{x_i}$ and $\timestack{\w_i}$. 



\subsection{Eliminate local variables in a subproblem}
\label{subsec:red_sub_prob}
A detailed description of the elimination of local variables in the subproblems is given in~\cite{nielsen15:licthesis,nielsen15:parallel_factorization_unp}. However, a shortened version including all the main steps are given here for completeness.
It is shown that even when $\Prh{i+1}$, $\Psirh{i+1}$ and $\crh{i+1}$ are not known in~\eqref{eq:sub_problem_dscr}, it is possible to eliminate local variables and reduce the sizes of the individual subproblems. The core idea with this approach is that the unknowns $\Prh{i+1}$ and $\Psirh{i+1}$ will indeed influence the solution of the subproblem, but as is shown in~\cite{nielsen15:licthesis,nielsen15:parallel_factorization_unp}, the resulting degree of freedom is often very limited compared to the dimension of the full vector $\timestack{\w_i}$. The constant $\crh{i+1}$ affects the optimal value of the cost function but not the solution. The elimination of variables can be done separately for the $\p+1$ subproblems, which opens up for a structure that can be solved in parallel. 
%
%
In the remaining part of this section the subindices $i$ in~\eqref{eq:sub_problem_dscr} are omitted for notational brevity, \ie, $\Psirh{i+1}$ is written $\Psirh{}$ etc. 

It will now be shown how the structure in the subproblem~\eqref{eq:sub_problem_dscr} can be exploited to eliminate local variables in an interval. The elimination of local variables and reduction of the subproblems will be simplified by using a preliminary feedback policy which is computed using the Riccati recursion. The use of this preliminary feedback is in principle not necessary, but it will later be seen that the main computationally demanding key computations can be performed more efficiently by using it. To compute the preliminary feedback, let the \uftoc problem~\eqref{eq:sub_problem_dscr} with unknown $\Prh{}$, $\Psirh{}$ and $\crh{}$ be factored and solved for the preliminary choice ${\Prh{}=0}$, $\Psirh{} = 0$ and $\crh{} = 0$ using algorithms~\ref{alg:factorization} and~\ref{alg:bwd_rec}. The resulting optimal policy for $\Prh{} = 0$ and $\Psirh{}=0$ is then $\w_{0,t} = k_{0,t+1}+K_{0,t+1}x_t$ for $t \in \intset{0}{N-1}$. The subindex ''$0$'' denotes variables that correspond to this preliminary solution. 


It will now be investigated how $\w_t$ and the cost function are affected when $\Prh{} \neq 0$, $\Psirh{} \neq 0$ and $\crh{} \neq 0$. Let the contribution to $\w_t$ from the unknown $\Prh{}$ and $\Psirh{}$ be denoted $\wb{t} \in \mathbb{R}^{\nw}$. Using the preliminary feedback, which is optimal for $\Ph{} = 0$ and ${\Psih{} = 0}$, $\w_t$ can be written
\begin{equation}
\w_{t} = k_{0,t+1}+K_{0,t+1}x_{t}+\wb{t}, \; t \in \intset{0}{N-1}. \label{eq:control_signal_def}
\end{equation}
Note that $\wb{t}$ is an arbitrary $\nw$-vector, hence there is no loss of generality in this assumption. From now on, the policy~\eqref{eq:control_signal_def} is used in the subproblem, and it will be shown that the degree of freedom in $\timestack{\wb{}}$ can be reduced. It is shown in~\cite{nielsen15:licthesis,nielsen15:parallel_factorization_unp} that the \uftoc problem~\eqref{eq:sub_problem_dscr} can be written
\begin{equation}
\minimizes{&\frac{1}{2}x_0^T\Qh{x}x_0 + \lh{x}^Tx_0+\frac{1}{2}\timestack{\wb{}}^T\timestack{\Qb{\wb{}}}\timestack{\wb{}} + \cb{0,0}\\&+\frac{1}{2}x_N^T\Prh{}x_N-\Psirh{}^Tx_N + \crh{}}{x_0,\timestack{\wb{}},x_N}{x_0 &= \xh{} \\ x_N &= \Ah{}x_0+\Sc \timestack{\wb{}} + \ah{},} \label{eq:proof:thm:reduce_subprob:dense_prob}
\end{equation}
when using the preliminary feedback~\eqref{eq:control_signal_def}. Here
\begin{subequations}
\begin{equation}
\Qh{x} \triangleq P_{0,0}, \quad \lh{x} \triangleq -\Psi_{0,0}, \label{eq:Qxh_lxh_def}
\end{equation}
\begin{equation}
\timestack{\Qb{\wb{}}}\triangleq \begin{bmatrix}
G_{0,1} \\
 & \ddots \\
 & & G_{0,N}
\end{bmatrix}, 
\Ah{} \triangleq  \prod_{t=0}^{N-1} \left(A_t + B_t K_{0,t+1} \right), \label{eq:proof:thm:reduce_subprob:QuA}
\end{equation}
\begin{equation}
\timestack{S} \triangleq \begin{bmatrix}
\prod_{t=1}^{N-1} \left(A_t + B_t K_{0,t+1} \right)B_0 & & \hdots & B_{N-1}
\end{bmatrix}, \label{eq:proof:thm:reduce_subprob:B}
\end{equation}
\begin{align}
\ah{} &\triangleq 
 \sum_{\tau=0}^{N-1}\prod_{t=\tau+1}^{N-1} \left(A_t + B_t K_{0,t+1} \right)(a_{\tau}+B_{\tau}k_{0,\tau+1}) ,\label{eq:proof:thm:reduce_subprob:a}
\end{align}
\end{subequations}
and $P_{0,0}$, $\Psi_{0,0}$ and $\cb{0,0}$ are computed by algorithms~\ref{alg:factorization} and~\ref{alg:bwd_rec} with the preliminary choice $\Prh{} = 0$, $\Psirh{} = 0$ and $\crh{} = 0$. 

 The problem~\eqref{eq:proof:thm:reduce_subprob:dense_prob} is a \uftoc problem with prediction horizon~$1$ and~$N \nw$ control inputs. The equations that define the factorization of the \KKT system of~\eqref{eq:proof:thm:reduce_subprob:dense_prob} are 
\begin{subequations}
\label{eq:parric:reduce_subprob}
\begin{align}
\Fh{} &= P_{0,0} + \Ah{}^T \Prh{} \Ah{}, \label{eq:proof:thm:reduce_subprob:Fbar}\\
\Gb{} &= \Qb{\wb{}}+ \Sc^T \Prh{}\, \Sc \label{eq:proof:thm:reduce_subprob:G}, \quad
\Hbpric{} = \Ah{}^T \Prh{} \, \Sc ,\\ 
\Gb{}\Kb{} &= - \Hbpric{}^T \label{eq:proof:thm:reduce_subprob:GK} , \quad 
\Gb{}\kb{} = \Sc^T\left( \Psirh{} - \Prh{}\ah{} \right). 
\end{align}
\end{subequations}
These can be used to compute the optimal solution of~\eqref{eq:proof:thm:reduce_subprob:dense_prob} to obtain the optimal $\timestack{\bar{\w}}$.
%
%
Using~\eqref{eq:proof:thm:reduce_subprob:G}, the first equation in~\eqref{eq:proof:thm:reduce_subprob:GK} can be written as
\begin{equation}
\left(\Qb{\wb{}}+ \Sc^T \Prh{}\, \Sc \right)\Kb{} = -\Sc^T \Prh{} \Ah{}. \label{eq:proof:thm:reduce_subprob:GK_written_out}
\end{equation}
In~\cite{nielsen15:licthesis,nielsen15:parallel_factorization_unp} it is shown that it is possible to reduce the number of equations and reducing the degree of freedom of $\timestack{\bar \w}$ by exploiting the structure in~\eqref{eq:proof:thm:reduce_subprob:GK_written_out}. To do this, let $U_1 \in \mathbb{R}^{N \nw \times \nuo}$ with $\nuo \leq \nx$ be an orthonormal basis for $\range{\Sc^T}$, \ie, the range space of $\Sc^T$. Then, by introducing $\Kh{} \in \realnums{\nuo \times \nx}$ to parametrize the feedback matrix as
\begin{equation}
\Kb{} = \inv{\timestack{{\bar Q_{\wb{}}}}} U_1 \Kh{},
\end{equation}
and inserting this choice of $\Kb{}$ into~\eqref{eq:proof:thm:reduce_subprob:GK_written_out} gives
\begin{equation}
\parens{U_1 + \Sc^T \Ph{} \, \Sc\inv{\timestack{{\bar Q_{\wb{}}}}} U_1} \Kh{} = -\Sc^T \Prh{} \Ah{}. \label{eq:GK_reduce_step_1}
\end{equation}
Furthermore, by multiplying~\eqref{eq:GK_reduce_step_1} with the full rank matrix $U_1^T \inv{\timestack{{\bar Q_{\wb{}}}}}$ from the left gives
\begin{equation}
\parens{U_1^T \inv{\timestack{{\bar Q_{\wb{}}}}}  U_1 + U_1^T \inv{\timestack{{\bar Q_{\wb{}}}}} \Sc^T \Ph{} \, \Sc \inv{\timestack{{\bar Q_{\wb{}}}}} U_1 } \Kh{} = - U_1^T\inv{\timestack{{\bar Q_{\wb{}}}}} \Sc^T \Ph{} \Ah{}, \label{eq:proof:thm:reduce_subprob:reducedGK_S}
\end{equation}
which is equivalent to the system of equations~\eqref{eq:proof:thm:reduce_subprob:GK_written_out}.

Now, by introducing the variables
\begin{align}
\Qh{\w} &\triangleq U_1^T \inv{\timestack{{\bar Q_{\wb{}}}}} U_1 \in \posdefmats^{\nuo} , \quad
\Bh{} \triangleq \Sc \inv{\timestack{{\bar Q_{\wb{}}}}}  U_1 \in \realnums{\nx \times \nuo}, \label{eq:Bh_def} \\
\Gh{} & \triangleq \Qh{\w} + \Bh{}^T \Prh{} \Bh{}, \quad
\Hh{} \triangleq \Ah{}^T \Prh{} \Bh{}, \label{eq:proof:thm:Gh_Hh_def}
\end{align}
eq.~\eqref{eq:proof:thm:reduce_subprob:reducedGK_S} can be written as
\begin{equation}
\Gh{} \Kh{} = - \Hh{}^T. \label{eq:proof:thm:reduce_subprob:GhKh}
\end{equation}
\begin{remark}
The preliminary feedback in~\eqref{eq:control_signal_def} results in a block-diagonal $\timestack{\Qb{\wb{}}}$ with blocks given by $G_{0,t+1}$ for $t \in \intset{0}{N-1}$. Hence, $\Qh{\w}$ and $\Bh{}$ can be computed efficiently using block-wise computations where the factorizations of $G_{0,t+1}$ from computing $K_{0,t+1}$ can be re-used.
\end{remark}

By using analogous calculations, the structure can be exploited also in the second equation in~\eqref{eq:proof:thm:reduce_subprob:GK} to reduce it to
\begin{equation}
\Gh{} \kh{} = \Bh{}^T \left(\Psirh{} - \Prh{}\ah{}\right),
\end{equation}
with $\kh{} \in \mathbb{R}^{\nuo}$. Hence,~\eqref{eq:parric:reduce_subprob} can equivalently be written as
\begin{subequations}
\label{eq:parric:reduce}
\begin{align}
\Fh{} &= \Qh{x} + \Ah{}^T \Prh{} \Ah{} \label{eq:proof:thm:reduce:F}, \\ 
\Gh{} &= \Qh{\w} + \Bh{}^T \Prh{} \Bh{}, \quad
\Hh{} = \Ah{}^T \Prh{} \Bh{}, \\
\Gh{} \Kh{} &= -\Hh{}^T \label{eq:proof:thm:reduce:GK}, \quad 
\Gh{} \kh{} = \Bh{}^T\left(\Psirh{} - \Prh{}\ah{} \right), 
\end{align}
\end{subequations}
which can be identified as the factorization of the \KKT system of a \uftoc problem in the form~\eqref{eq:proof:thm:reduce_subprob:dense_prob} but with input signal dimension $n_{\hat \w} = \nuo \leq \nx$. Hence~\eqref{eq:parric:reduce} defines the optimal solution to a smaller \uftoc problem. This important result is summarized in Theorem~\ref{thm:reduce_subprob}, which is repeated from~\cite{nielsen15:licthesis,nielsen15:parallel_factorization_unp} (where the subindices $i$ in~\eqref{eq:sub_problem_dscr} are again used).

%
\begin{theorem}
\label{thm:reduce_subprob}
A \uftoc problem given in the form~\eqref{eq:sub_problem_dscr} with unknown $\Prh{i+1}$, $\Psirh{i+1}$ and $\crh{i+1}$ can be reduced to a \uftoc problem in the form
%
%
\begin{equation}
\minimizes{&\frac{1}{2}x_{0,i}^T \Qh{x,i} x_{0,i} + \frac{1}{2}\wh{i}^T \Qh{\w,i} \wh{i} + \lh{x,i}^Tx_{0,i} + \ch{i} \\
&+ \frac{1}{2}x_{N_i,i}^T \Prh{i+1} x_{N_i,i} - \Psirh{i+1}^Tx_{N_i,i} + \crh{i+1}}{x_{0,i},\, x_{N_i,i}, \,\wh{i}}{x_{0,i} &= \xh{i} \\x_{N_i,i} &= \Ah{i}x_{0,i}+\Bh{i}\wh{i}+\ah{i}, }
\end{equation}
where $\xh{i},x_{0,i}, x_{N_i,i} \in \mathbb{R}^{\nx}$ and $\wh{i} \in \mathbb{R}^{\nw}$, with $\nw \leq \nx$. $\Ah{i}$ and $\ah{i}$ are defined in~\eqref{eq:proof:thm:reduce_subprob:QuA} and~\eqref{eq:proof:thm:reduce_subprob:a}, respectively, and $\Qh{x,i}$, $\Qh{\w,i}$, $\lh{x,i}$ and $\Bh{i}$ are given by~\eqref{eq:Qxh_lxh_def} and~\eqref{eq:Bh_def}, and $\ch{i} \triangleq \cb{0,0}$ where $\cb{0,0}$ is defined as in~\eqref{eq:proof:thm:reduce_subprob:dense_prob}.
\end{theorem}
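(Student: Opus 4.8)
The plan is to carry out the reduction in two stages: first I would use the preliminary feedback to \emph{condense} the subproblem~\eqref{eq:sub_problem_dscr} into an equivalent optimal control problem over a single step but with a high-dimensional correction input $\timestack{\wb{}}$, and then show that only a subspace of dimension at most $\nx$ of that input carries any genuine degree of freedom, so that the problem can be rewritten with an input $\wh{i}$ of dimension $\nuo \leq \nx$.

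First I would factor and solve~\eqref{eq:sub_problem_dscr} for the preliminary choice $\Prh{} = 0$, $\Psirh{} = 0$, $\crh{} = 0$ with Algorithms~\ref{alg:factorization} and~\ref{alg:bwd_rec}, and parametrize the process noise as in~\eqref{eq:control_signal_def} by $\w_t = k_{0,t+1} + K_{0,t+1}x_t + \wb{t}$. Since $\wb{t}$ is an arbitrary $\nw$-vector this is done without loss of generality. Substituting this policy into the dynamics and unrolling the resulting closed-loop recursion over the interval expresses $x_{N}$ affinely in $x_0$ and the stacked correction $\timestack{\wb{}}$, with the matrices $\Ah{}$, $\Sc$ and $\ah{}$ of~\eqref{eq:proof:thm:reduce_subprob:QuA}--\eqref{eq:proof:thm:reduce_subprob:a}; the quadratic cost in $\timestack{\wb{}}$ becomes block diagonal, and the preliminary solution contributes the constant $\cb{0,0}$. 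This yields the horizon-one problem~\eqref{eq:proof:thm:reduce_subprob:dense_prob}, whose Riccati factorization is~\eqref{eq:parric:reduce_subprob}.

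The reduction step is the crux. In~\eqref{eq:proof:thm:reduce_subprob:GK_written_out} both the right-hand side $-\Sc^T\Prh{}\Ah{}$ and the coupling term $\Sc^T\Prh{}\Sc\,\Kb{}$ have their columns in $\range{\Sc^T}$. Because $\timestack{\Qb{\wb{}}}$ is block diagonal with the positive-definite blocks $G_{0,t+1}$ it is invertible, so rearranging~\eqref{eq:proof:thm:reduce_subprob:GK_written_out} shows that every solution must satisfy $\timestack{\Qb{\wb{}}}\Kb{} \in \range{\Sc^T}$, i.e. $\Kb{} \in \inv{\timestack{\Qb{\wb{}}}}\range{\Sc^T}$. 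This is precisely what makes the parametrization $\Kb{} = \inv{\timestack{\Qb{\wb{}}}}U_1\Kh{}$, with $U_1$ an orthonormal basis for $\range{\Sc^T}$, a reformulation rather than a restriction. Inserting it and multiplying from the left by the full-column-rank matrix $U_1^T\inv{\timestack{\Qb{\wb{}}}}$ collapses the $N\nw$ equations to the $\nuo$ equations~\eqref{eq:proof:thm:reduce:GK} in the quantities $\Qh{\w}$, $\Bh{}$, $\Gh{}$, $\Hh{}$ of~\eqref{eq:Bh_def}--\eqref{eq:proof:thm:Gh_Hh_def}; applying the same manipulation to the constant equation gives $\Gh{}\kh{} = \Bh{}^T\parens{\Psirh{} - \Prh{}\ah{}}$. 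Collecting these relations in~\eqref{eq:parric:reduce} and recognizing them as the Riccati factorization of a horizon-one \uftoc problem with input $\wh{i} \in \realnums{\nuo}$ yields the claimed reduced problem, and $\nuo = \dim\range{\Sc^T} \leq \nx$ gives the stated bound on the input dimension.

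The step I expect to require the most care is justifying that the parametrization of $\Kb{}$ (and similarly of $\kb{}$) loses no generality, i.e. that the left-multiplication by $U_1^T\inv{\timestack{\Qb{\wb{}}}}$ preserves the solution set rather than merely producing a necessary condition. One must argue both inclusions: that every solution of the full system~\eqref{eq:proof:thm:reduce_subprob:GK_written_out} has the assumed form, which is the range-space observation above, and that every $\Kh{}$ solving the reduced system~\eqref{eq:proof:thm:reduce:GK} lifts back to a solution of the full system. Both rest on the invertibility of $\timestack{\Qb{\wb{}}}$ and on the fact that all the $\Prh{}$- and $\Psirh{}$-dependent coupling enters only through $\Sc$, so that the reduced dynamics $x_{N} = \Ah{}x_0 + \Bh{}\wh{i} + \ah{}$ with $\Bh{} = \Sc\inv{\timestack{\Qb{\wb{}}}}U_1$ reproduces exactly the set of attainable endpoints $\Sc\timestack{\wb{}}$ of the condensed problem.
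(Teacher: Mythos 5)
Your proposal follows essentially the same route as the paper's own derivation in Section~\ref{subsec:red_sub_prob} (whose formal details the paper defers to its cited references): compute the preliminary Riccati feedback, condense the subproblem to the horizon-one problem~\eqref{eq:proof:thm:reduce_subprob:dense_prob}, and then reduce the input dimension via the parametrization $\Kb{} = \inv{\timestack{\Qb{\wb{}}}} U_1 \Kh{}$ with $U_1$ an orthonormal basis for $\range{\Sc^T}$, collapsed by left-multiplication with $U_1^T \inv{\timestack{\Qb{\wb{}}}}$. Your explicit range-space argument showing that this parametrization loses no generality, together with flagging the need to verify both inclusions of the solution sets, fills in a point the paper only asserts (``which is equivalent to the system of equations~\eqref{eq:proof:thm:reduce_subprob:GK_written_out}''), but it is a refinement of, not a departure from, the same proof.
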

\begin{proof}
The proof is given in~\cite{nielsen15:parallel_factorization_unp} and~\cite{nielsen15:licthesis}.
\end{proof}

To avoid computing the orthonormal basis $U_1$ in practice a transformation $\Kh{} = T\hat L$, where $T \in \mathbb{R}^{\nuo \times \nx}$ has full rank and $U_1T = \Sc^T$, can be used. By using this choice of $\Kh{}$ in~\eqref{eq:proof:thm:reduce_subprob:reducedGK_S} and then multiplying from the left with $T^T$, the matrices $\Qh{\w}$, $\Bh{}$ and~\eqref{eq:proof:thm:reduce_subprob:GhKh} can instead be written
\begin{align}
\Qh{\w} &= \Bh{} \triangleq \Sc \inv{\timestack{{\bar Q_{\wb{}}}}}\Sc^T\label{eq:parric:Qh_Bh_alternative_def} \textrm{  and   } \; 
\Gh{}\hat L = - \Hh{}^T, 
\end{align}
where $\Gh{}$ and $\Hh{}$ are defined as in~\eqref{eq:proof:thm:Gh_Hh_def} but with the new $\Qh{\w}$ and $\Bh{}$.
The \uftoc problem corresponding to~\eqref{eq:parric:reduce} then obtains an (possibly) increased control signal dimension $n_{\hat w} = n_x \geq \nuo$ compared to when $\Qh{\w}$ and $\Bh{}$ are defined as in~\eqref{eq:Bh_def}, but with the advantage that $\Qh{\w}$ and $\Bh{}$ can be easily computed. Analogous calculations can be made for $\kh{}$.

\begin{remark}
If $\Sc^T$ is rank deficient then $U_1\in \mathbb{R}^{N\nw \times \nuo}$ has $\nuo < \nx$ columns. Hence $\Gh{}$ is singular and $\hat L$ non-unique in~\eqref{eq:parric:Qh_Bh_alternative_def}. How to cope with this is described in, \eg,~\cite{axehill08:thesis,nielsen15:licthesis}.
\end{remark}

For the last subproblem with $i=p$, the variables $\Prh{p+1} = \Qx{N_p,p}$, $\Psirh{p+1} = - \linx{N_p,p}$ and $\crh{p+1} = c_{N_p,p}$ in~\eqref{eq:sub_problem_dscr} are in fact known. Hence, the last subproblem can be factored directly and all variables but the initial state can be eliminated.

The formal validity of the reduction of each subproblem $i \in \intset{0}{p-1}$ is given by Theorem~\ref{thm:reduce_subprob}, while the computational procedure is summarized in Algorithm~\ref{alg:prel_fact}, which is basically a Riccati factorization and backward recursion as in algorithms~\ref{alg:factorization} and~\ref{alg:bwd_rec}. Here $\Qwh{i}$ and $\Bh{i}$ are computed as in~\eqref{eq:parric:Qh_Bh_alternative_def}. 
\vspace{-15pt}
\begin{algorithm}[h!]
  \caption{Reduction using Riccati factorization} \label{alg:prel_fact}
  \begin{algorithmic}[1]
    \STATE $P_N :=   0, \; \Psi_N :=  0, \; \cb{N} :=  0$ \\
    $\hat Q_w := 0, \; \Vk{N} := I, \; \vk{N} := 0 $  \label{alg:prel_fac:line:init}
    \FOR{$t=N-1,\ldots,0$}
    \STATE $F_{t+1} := Q_{x,t} + A^T_tP_{t+1}A_t$\label{alg:prel_fact:line:F_comp} \\
    \STATE $G_{t+1} := Q_{\w,t} + B^T_tP_{t+1}B_t$ \\
    \STATE $H_{t+1} := Q_{x\w,t} + A^T_tP_{t+1}B_t$\label{alg:prel_fact:line:H_comp} \\
    \STATE Compute and store a factorization of $G_{t+1}$.\label{alg:prel_fact:line:factorize_G}
    \STATE Compute a solution $K_{t+1}$ to: 
    $G_{t+1}K_{t+1} = -H^T_{t+1}$ \
    \STATE Compute a solution $k_{t+1}$ to \\
     $G_{t+1}k_{t+1} = B^T_t\Psi_{t+1} -
        \lin{\w,t} - B_t^T P_{t+1}a_t$ \\
    \STATE $\Psi_t := A^T_t\Psi_{t+1}
    - H_{t+1}k_{t+1} - \lin{x,t} - A_t^T P_{t+1}a_t$
    \STATE $P_{t} := F_{t+1} - K^T_{t+1}G_{t+1}K_{t+1}$\label{alg:prel_fact:line:P_comp}
    \STATE Compute a solution $L_{t+1}$ to:  
    $G_{t+1}L_{t+1} = -B_t^T \Vk{t+1}$ \\
    \STATE $\Vk{t} := \left( A_t^T + K_{t+1}^T B_t^T \right)\Vk{t+1}$
    \STATE $\vk{t} := \vk{t+1} + \Vk{t+1}^T(a_t+B_tk_{t+1})$
    \STATE $\hat Q_\w := \hat Q_\w + L_{t+1}^T G_{t+1} L_{t+1}$
    \ENDFOR
    \STATE $\Ah{} := \Vk{0}^T, \; \Bh{} := \Qh{\w}, \; \ah{} := \vk{0}$ \\
  	$ \Qh{x} := P_0, \; \lh{x} := -\Psi_0, \; \ch{} := \cb{0} $
  \end{algorithmic}
\end{algorithm}



\subsection{Constructing the master problem}
\label{subsec:red_mpc_prob}
According to Theorem~\ref{thm:reduce_subprob} and the theory presented in Section~\ref{subsec:red_sub_prob}, all subproblems $i \in \intset{0}{p-1}$ can be reduced to depend only on the variables $\xh{i}$, $x_{N_i,i}$ and $\wh{i}$, and subproblem $i=p$ depends only on $\xh{p}$. The variable $\wh{i}$ represents the unknown part of $\w_{t,i}$ that are due to the initially unknown $\Ph{i+1}$ and $\Psih{i+1}$. Using the definition of the subproblems and the property ${x_{N_i,i} = x_{0,i+1} = \xh{i+1}}$ that were introduced in the decomposition in Section~\ref{subsec:sub_prob}, the reduced subproblems $i \in \intset{0}{p}$ can be combined into a master problem which is equivalent to the problem in~\eqref{eq:org_eqc_problem}. By using the notation from Section~\ref{subsec:red_sub_prob}, the master problem can be written
\begin{equation}
\minimizes{&\sum_{i=0}^{\p-1}\left(\frac{1}{2}\begin{bmatrix}
\xh{i} \\ \wh{i}
\end{bmatrix}^T    
\Qh{i} \begin{bmatrix}
    \xh{i} \\  \wh{i}
\end{bmatrix} + \lh{x,i}^T \xh{i} + \ch{i} \right) \\&+ \frac{1}{2} \xh{\p}^T \Qxh{\p} \xh{\p} + \lh{x,p}^T\xh{p}+\ch{p}}{\timestack{\xh},\timestack{\wh}}{\xh{0}&=\bar x_0 \\ \xh{i+1} &= \Ah{i} \xh{i} + \Bh{i} \wh{i} + \ah{i}, \; i \in \intset{0}{\p-1}.}
\label{eq:red_mpc_problem}
\end{equation}
This is a \uftoc problem in the same form as~\eqref{eq:org_eqc_problem} but with shorter prediction horizon $p <N$ and block-diagonal $\Qh{i}$. The dynamics equations $\xh{i+1}=\Ah{i}\xh{i} + \Bh{i} \wh{i} + \ah{i}$ are due to the relation
\begin{equation}
\xh{i+1} = x_{0,i+1} = x_{N_i,i} = \Ah{i}\xh{i} + \Bh{i} \wh{i} + \ah{i}.
\end{equation}
 Hence, a \uftoc problem $\MPCprob{N}$ can be reduced to a \uftoc problem $\MPCprob{\p}$ in the same form but with shorter prediction horizon and possibly fewer variables dimension in each time step. Each subproblem is reduced individually using an algorithm based on the Riccati recursion.
%


\section{Computing the  Riccati Recursion in Parallel}
\label{sec:par_ric}
The reduction of the individual subproblems according to Section~\ref{subsec:red_sub_prob} can be performed in parallel. To reach consensus between all subproblems in order to solve the original problem~\eqref{eq:org_eqc_problem}, the master problem~\eqref{eq:red_mpc_problem} can be solved to obtain $\Ph{i+1}$, $\Psih{i+1}$, $\ch{i+1}$ and the optimal $\xh{i}$ for $i \in \intset{0}{\p}$. When these variables are computed, the independent subproblems can be solved in parallel using algorithms~\ref{alg:factorization}-\ref{alg:fwd_rec} with the initial $x_{0,i} = \xh{i}$, $\Ph{i+1}$, $ \Psih{i+1}$ and $ \ch{i+1}$ for $i\in \intset{0}{\p}$. 


To compute $\Ph{i+1}$, $\Psih{i+1}$, $\ch{i+1}$ and $\xh{i}$ the master problem~\eqref{eq:red_mpc_problem} can be solved serially using the Riccati recursion. However,~\eqref{eq:red_mpc_problem} can instead itself be reduced in parallel in an upward pass until a \uftoc problem with a prediction horizon of pre-determined length is obtained. This top problem is then solved, and the solution is propagated down in the tree in a downward pass until the subproblems of the original problem~\eqref{eq:org_eqc_problem} are solved. This procedure is shown in Fig.~\ref{fig:arb_tree_struct}, where $\MPCsub{i}{k}$ denotes subproblem $i$ in the form~\eqref{eq:sub_problem_dscr} at level $k$ in the tree. The number of steps in the upward and downward pass are known a-priori and can be determined by the user.

Since the subproblems at each level can be reduced and solved in parallel and the information flow is between parent and children in the tree, the Riccati recursion can be computed in parallel using the theory proposed in this paper.

\begin{figure}[htb]
\centering
\def\svgwidth{\columnwidth}
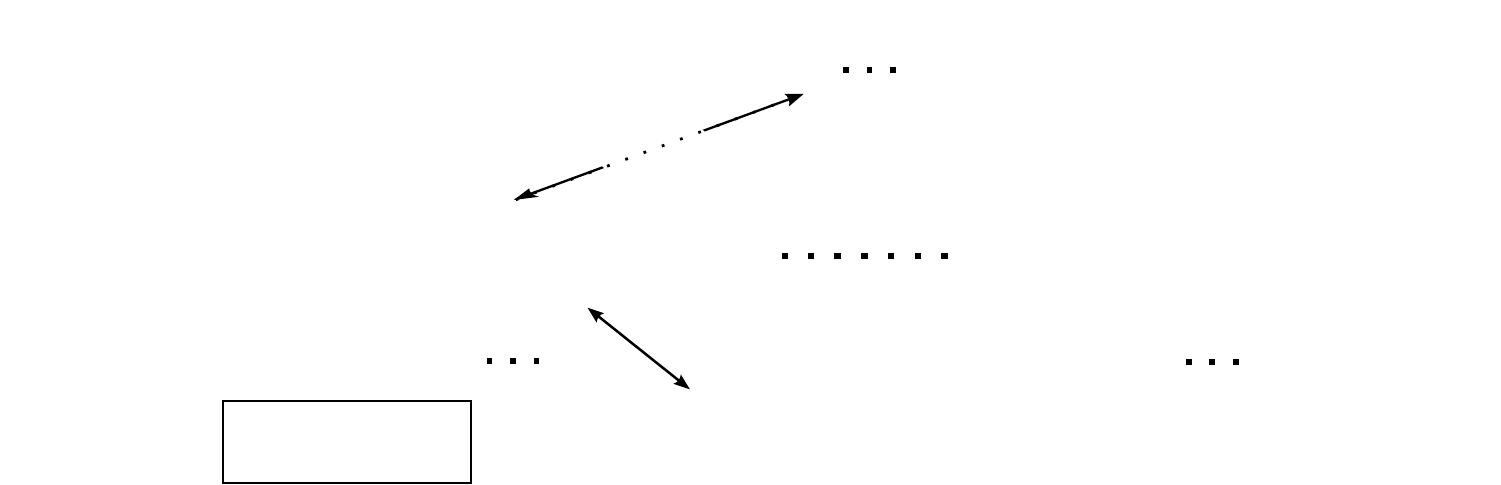
\caption{The original \uftoc problem $\MPCprob{N}$ can be reduced repeatedly using Riccati recursions. When the solution to the top problem is computed, it can be propagated back in the tree until the bottom level is solved. }
\label{fig:arb_tree_struct}
\end{figure}

%


The \uftoc problem~\eqref{eq:org_eqc_problem} is reduced in parallel in several steps in Algorithm~\ref{alg:build_tree} to a \uftoc problem with prediction horizon~$\p_{m-1}$. Assume, for simplicity, that all subproblems are of equal batch length $N_s$ and that $N=N_s^{m+1}$ for some integer $m \geq 1$. Then, provided that $N_s^m$ computational units are available, the reduction can be made in $m$ steps, \ie, the reduction algorithm has $\Ordo{\log N}$ complexity growth.
\begin{algorithm}[h!]
  \caption{Parallel reduction of \uftoc problem} \label{alg:build_tree}
  \begin{algorithmic}[1]
  	\STATE Set the maximum level number $m$
  	\STATE Set the number of subproblems $p_k+1$ for each level $k \in \intset{0}{m}$ with $p_m=0$
 	\FOR{$k:=0,\ldots,m-1$} 
   	    \STATE \textbf{parfor} $i = 0,\ldots,p_k$ \textbf{do} \label{alg:line:build_tree_loop}
   	    \STATE \hspace{1.5ex} Create subproblem $\MPCsub{i}{k}$
        	\STATE \hspace{1.5ex} Reduce subproblem $\MPCsub{i}{k}$ using Algorithm~\ref{alg:prel_fact}
      		\STATE \hspace{1.5ex} Send $\Ah{i}^k$, $\Bh{i}^k$, $\ah{i}^k$, $\Qxh{i}^k$, $\Qwh{i}^k$, $\lh{x,i}^k$ and $\ch{i}^k$ to parent 
     	\STATE \textbf{end parfor}
    \ENDFOR
  \end{algorithmic}
\end{algorithm}
%

When the reductions of the subproblems are completed, Algorithm~\ref{alg:propagate_solution} is applied to solve each subproblem $i \in \intset{0}{p_{k-1}}$ at level $k$ using algorithms~\ref{alg:factorization}-\ref{alg:fwd_rec} with the optimal $\xh{i}^{k+1}$, $\hat P_{i+1}^{k+1}$, $\Psih{i+1}^{k+1}$ and $\ch{i+1}^{k+1}$ from the respective parent. The algorithm starts by solving the top problem $\MPCprob{\p_{m-1}}$ in Fig.~\ref{fig:arb_tree_struct}, and the solution is passed to its children. By solving the subproblems at each level and passing the solution to the children at the level below in the tree, the subproblems $\MPCsub{i}{0}$, $i \in \intset{0}{\p_0}$ at the bottom level can finally be solved individually. All subproblems can be solved using only information from their parents, and hence each level in the tree can be solved in parallel. 

By using the definition of the local variables, the optimal primal solution to the original \uftoc problem~\eqref{eq:org_eqc_problem} can be constructed from the solutions to the subproblems at the bottom level. The dual variables can be computed from all $P_{t,i}^0$ and $\Psi_{t,i}^0$ from the subproblems at the bottom level. Hence there are no complications with non-unique dual variables as in~\cite{nielsen14:parallel_mpc_ifac} when using the algorithm presented in this paper. The propagation of the solution from the top level to the bottom level can be made in $m+1$ steps provided that $N_s^m$ processing units are available. Since both algorithms~\ref{alg:build_tree} and~\ref{alg:propagate_solution} are solved in $\Ordo{\log N}$ complexity, the Riccati recursion and the solution to~\eqref{eq:org_eqc_problem} can be computed with $\Ordo{\log N}$ complexity growth. 
\begin{algorithm}
  \caption{Parallel solution of \uftoc problem} \label{alg:propagate_solution}
  \begin{algorithmic}[1]
  	\STATE Get the top level number $m$ and $p_k$:s from Algorithm~\ref{alg:build_tree}
  	\STATE Initialize $\xh{0}^{m} := \xinit$
    \FOR{$k := m, m-1,\ldots, 0$}
	    \STATE \textbf{parfor} {$i:=0,\ldots,p_k$} \textbf{do}\label{alg:line:propagate_solution_loop}
			\STATE \hspace{1.5ex} Solve subproblem $\MPCsub{i}{k}$ using algorithms~\ref{alg:factorization}-\ref{alg:fwd_rec}\label{alg:propagate_solution:line:solve}
		\STATE \hspace{1.5ex} \textbf{if }{$k > 0$} \textbf{then}
    		\STATE \hspace{3.5ex} Send $\Ph{t,i}^k, \Psih{t,i}^k, \ch{t,i}^k$ and $\xh{t,i}^k$ to each children\label{alg:line:prop_sol_send}
    		\STATE \hspace{1.5ex} \textbf{end if}
    	\STATE \textbf{end parfor} \label{alg:line:propagate_solution_loop_end}
    \ENDFOR
    \STATE Get the solution of~\eqref{eq:org_eqc_problem} from the solutions of all $\MPCsub{i}{0}$ 
  \end{algorithmic}
\end{algorithm}

Beyond what is presented here, as was observed already in~\cite{axehill:_towar_mpc}, standard parallel linear algebra can be used for many computations in the serial Riccati recursion in each subproblem to boost performance even further. This has however not been utilized in this work.




\section{Numerical results}
\label{subsec:num_res}

In the presented experiments from \textsc{Matlab}, parallel executions of the algorithms are simulated by executing them serially using one computational thread but still using the same information flow as for an actual parallel execution. The total computation time has been estimated by summing over the maximum computation time for each level in the tree, and hence the communication overhead is neglected. The influence of the communication overhead is discussed in the end of this section. The performance when computing the solution to~\eqref{eq:org_eqc_problem} of the parallel Riccati algorithm proposed in this work is compared with both the serial Riccati recursion and, as a reference for linear problems, the well-known \RTS smoother (see, \eg{},~\cite{kailath2000linear}). The \RTS smoother is only implemented in \matlab.
In all results presented in this section $N_s = 2$ has been used in the parallel algorithm.
\begin{remark}
Different batch lengths can be used for each subproblem in the tree. How to choose these to minimize computation time is not investigated here. However, similarly as in~\cite{axehill15:_contr_mpc}, the optimal choice depends on, \eg, the problem and the hardware on which the algorithm is implemented. 
\end{remark}

In \matlab the algorithms have been compared when solving \MHE problems (or computing Newton steps for inequality constrained \MHE problems) in the form~\eqref{eq:org_mhe} for systems of dimension $n_x = 20$, $\nw=20$ and $\ny=20$, see Fig.~\ref{fig:comp_log_nx20_nw20_ny20}. It can be seen that the parallel Riccati algorithm outperforms the serial Riccati for $N \gtrsim 20$ and the \RTS smoother for $N \gtrsim 30$. 

\begin{figure}[htb!]
\centering
\includegraphics[width=\columnwidth]{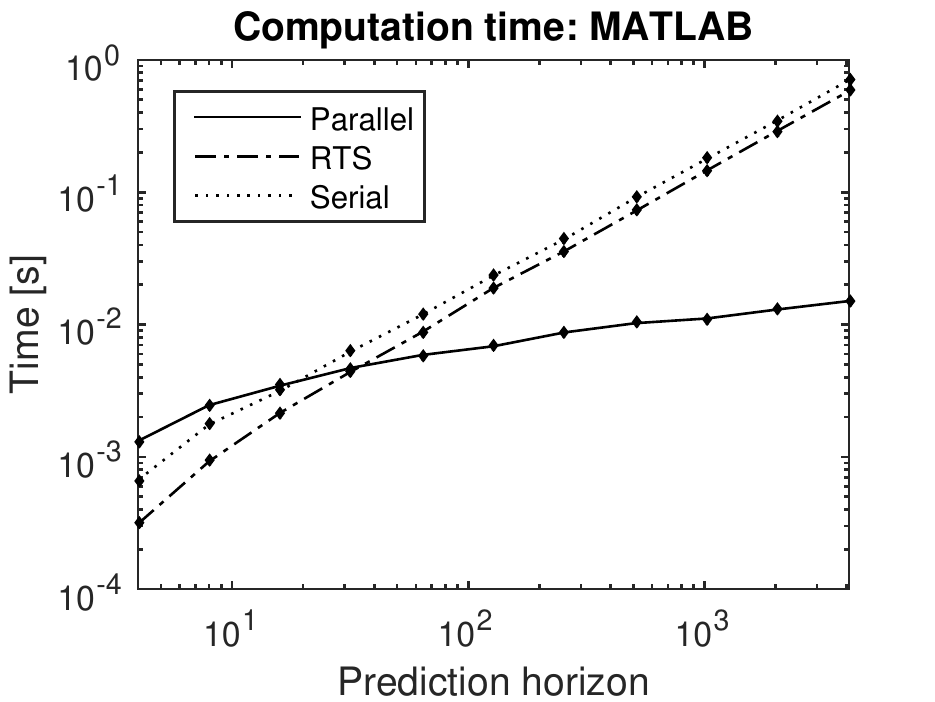}
\caption{Computation times when solving \MHE problems of order $n_x=20$, $\nw=20$ and $\ny=20$. The parallel Riccati algorithm outperforms the serial Riccati algorithm for $N\gtrsim 20$ and the \RTS smoother for $N \gtrsim 30$. }
\label{fig:comp_log_nx20_nw20_ny20}
\end{figure}
An \ansic implementation has been run on a computer cluster consisting of nodes with 8-core Intel Xeon E5-2660 @ 2.2 GHz \cpus with communication over \tcpip on Gigabit Ethernet. The computations were performed on resources provided by the Swedish National Infrastructure for Computing (\snic) at \nsc. The implementation is rudimentary and especially the communication setup can be improved, but the implemented algorithm serves as a proof-of-concept that the algorithm improves performance in terms of computation times for computations on real parallel hardware, taking communication delays into account. The computation times when solving \MHE problems in the form~\eqref{eq:org_eqc_problem} for systems of order $n_x = 20$, $\nw=20$ and $\ny=20$ are seen in Fig.~\ref{fig:comp_log_nx20_nu20_ansic}, where it is clear that the parallel algorithm solves a problem with $N=512$ approximately as fast as the serial algorithm solves the same one for $N=64$, and the break even is at $N \approx 24$. This computational speed-up can be important in smoothing problems and in \MHE problems where long horizons are often used,~\cite{rao98:_applic_inter_point_method_model_predic_contr}.

The communication overhead is approximately $20\%$ for this problem size, and it has been observed that communication times are roughly the same regardless of problem size. This indicates that there is a significant communication latency, and reducing these can significantly improve performance of the \ansic implemented algorithm.

\begin{figure}[htb!]
\centering
\includegraphics[width=\columnwidth]{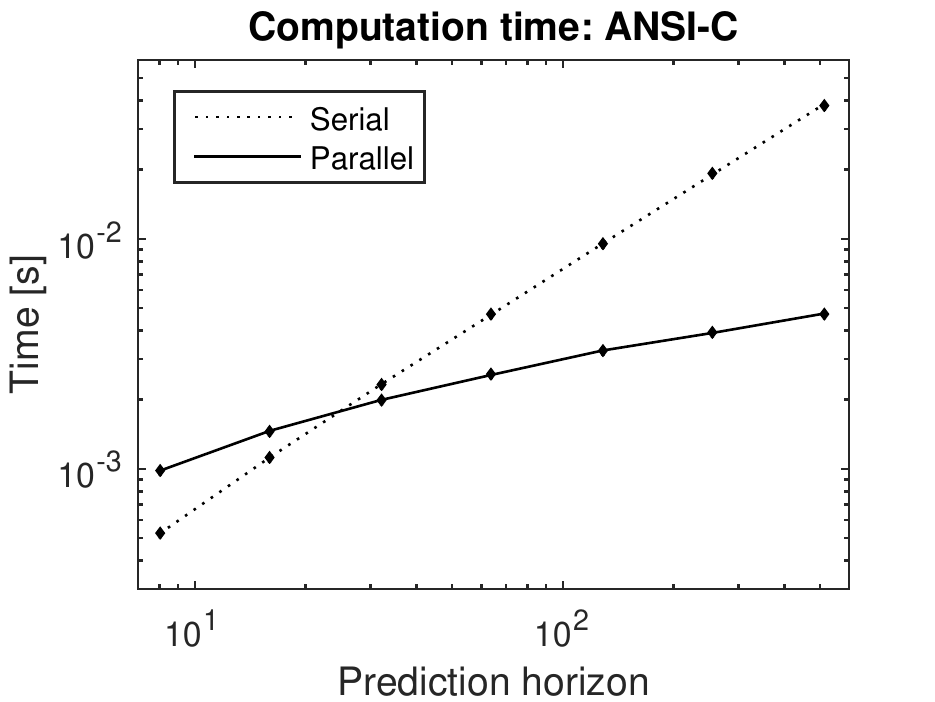}
\caption{Computation times for \ansic implementation using problems of order $n_x=20$, $\nw=20$ and $\ny = 20$. The parallel Riccati algorithm outperforms the serial for $N \gtrsim 24$ and it computes the solution to an \MHE problem with $N=512$ approximately as fast as for $N=64$ using the serial algorithm.}
\label{fig:comp_log_nx20_nu20_ansic}
\end{figure}

\section{Conclusions}
\label{sec:conclusion}
In this paper it is shown that the Newton step that is required in many methods for solving \MHE problems can be computed directly (non-iteratively) in parallel using Riccati recursions that exploit the structure from the \MHE problem. The proposed algorithm obtains logarithmic complexity growth in the estimation horizon length. Results from numerical experiments both in \matlab as well as in \ansic on real parallel hardware show that the proposed algorithm outperforms existing serial algorithms already for relatively small values of $N$. Future work includes the possibility to improve performance and reduce communication latencies by using more suitable hardware such as, \eg, Graphics Processing Units (\gpus) or Field-Programmable Gate Arrays (\fpgas).


\bibliography{axe_full,IEEEfull,ianFull}

\begin{thebibliography}{10}
\providecommand{\url}[1]{#1}
\csname url@samestyle\endcsname
\providecommand{\newblock}{\relax}
\providecommand{\bibinfo}[2]{#2}
\providecommand{\BIBentrySTDinterwordspacing}{\spaceskip=0pt\relax}
\providecommand{\BIBentryALTinterwordstretchfactor}{4}
\providecommand{\BIBentryALTinterwordspacing}{\spaceskip=\fontdimen2\font plus
\BIBentryALTinterwordstretchfactor\fontdimen3\font minus
  \fontdimen4\font\relax}
\providecommand{\BIBforeignlanguage}[2]{{%
\expandafter\ifx\csname l@#1\endcsname\relax
\typeout{** WARNING: IEEEtran.bst: No hyphenation pattern has been}%
\typeout{** loaded for the language `#1'. Using the pattern for}%
\typeout{** the default language instead.}%
\else
\language=\csname l@#1\endcsname
\fi
#2}}
\providecommand{\BIBdecl}{\relax}
\BIBdecl

\bibitem{jonson83:thesis}
H.~Jonson, ``A {N}ewton method for solving non-linear optimal control problems
  with general constraints,'' Ph.D. dissertation, Link\"{o}pings Tekniska
  H\"{o}gskola, 1983.

\bibitem{rao98:_applic_inter_point_method_model_predic_contr}
C.~Rao, S.~Wright, and J.~Rawlings, ``Application of interior-point methods to
  model predictive control,'' \emph{Journal of Optimization Theory and
  Applications}, vol.~99, no.~3, pp. 723--757, Dec. 1998.

\bibitem{hansson00:_primal_dual_inter_point_method}
A.~Hansson, ``A primal-dual interior-point method for robust optimal control of
  linear discrete-time systems,'' \emph{{IEEE} Transactions on Automatic
  Control}, vol.~45, no.~9, pp. 1639--1655, Sep. 2000.

\bibitem{bartlett02:_quadr}
R.~Bartlett, L.~Biegler, J.~Backstrom, and V.~Gopal, ``Quadratic programming
  algorithms for large-scale model predictive control,'' \emph{Journal of
  Process Control}, vol.~12, pp. 775--795, 2002.

\bibitem{vandenberghe02:_robus_full}
L.~Vandenberghe, S.~Boyd, and M.~Nouralishahi, ``Robust linear programming and
  optimal control,'' Department of Electrical Engineering, University of
  California Los Angeles, Tech. Rep., 2002.

\bibitem{akerblad04:_effic}
M.~{\AA}kerblad and A.~Hansson, ``Efficient solution of second order cone
  program for model predictive control,'' \emph{International Journal of
  Control}, vol.~77, no.~1, pp. 55--77, 2004.

\bibitem{axehill06:_mixed_integ_dual_quadr_progr_tail_mpc}
D.~Axehill and A.~Hansson, ``A mixed integer dual quadratic programming
  algorithm tailored for {MPC},'' in \emph{Proceedings of the 45th {IEEE}
  Conference on Decision and Control}, San Diego, USA, Dec. 2006, pp.
  5693--5698.

\bibitem{axevanhan07:_relax_applic_mipc_compa_and_eff_compu}
D.~Axehill, A.~Hansson, and L.~Vandenberghe, ``Relaxations applicable to mixed
  integer predictive control -- comparisons and efficient computations,'' in
  \emph{Proceedings of the 46th {IEEE} Conference on Decision and Control}, New
  Orleans, USA, 2007, pp. 4103--4109.

\bibitem{axehill08:thesis}
D.~Axehill, ``Integer quadratic programming for control and communication,''
  Ph.D. dissertation, Link\"{o}ping University, 2008.

\bibitem{axehill08:_dual_gradien_projec_quadr_progr}
D.~Axehill and A.~Hansson, ``A dual gradient projection quadratic programming
  algorithm tailored for model predictive control,'' in \emph{Proceedings of
  the 47th {IEEE} Conference on Decision and Control}, Cancun, Mexico, 2008,
  pp. 3057--3064.

\bibitem{diehl09:_nonlin_model_predic_contr}
M.~Diehl, H.~Ferreau, and N.~Haverbeke, \emph{Nonlinear Model Predictive
  Control}.\hskip 1em plus 0.5em minus 0.4em\relax Springer Berlin /
  Heidelberg, 2009, ch. Efficient Numerical Methods for Nonlinear MPC and
  Moving Horizon Estimation, pp. 391--417.

\bibitem{domahidi12:_effic_method_multis_probl_arisin}
A.~Domahidi, A.~Zgraggen, M.~N. Zeilinger, M.~Morari, and C.~N. Jones,
  ``Efficient interior point methods for multistage problems arising in
  receding horizon control,'' in \emph{IEEE Conference on Decision and Control
  (CDC)}, Maui, USA, Dec. 2012, pp. 668 -- 674.

\bibitem{rao00:thesis}
C.~Rao, ``Moving horizon strategies for the constrained monitoring and control
  of nonlinear discrete-time systems,'' Ph.D. dissertation, University of
  Wisconsin-Madison, 2000.

\bibitem{jorgensen04:thesis}
J.~J{\o}rgensen, ``Moving horizon estimation and control,'' Ph.D. dissertation,
  Technical University of Denmark (DTU), 2004.

\bibitem{haverbeke09:structure_expl_ip_for_mhe}
N.~Haverbeke, M.~Diehl, and B.~De~Moor, ``A structure exploiting interior-point
  method for moving horizon estimation,'' in \emph{Decision and Control, 2009
  held jointly with the 2009 28th Chinese Control Conference. CDC/CCC 2009.
  Proceedings of the 48th IEEE Conference on}, Dec 2009, pp. 1273--1278.

\bibitem{kailath2000linear}
T.~Kailath, A.~H. Sayed, and B.~Hassibi, \emph{Linear estimation}.\hskip 1em
  plus 0.5em minus 0.4em\relax Prentice Hall Upper Saddle River, NJ, 2000,
  vol.~1.

\bibitem{constantinides2009tutorial}
G.~Constantinides, ``Tutorial paper: Parallel architectures for model
  predictive control,'' in \emph{Proceedings of the European Control
  Conference}, Budapest, Hungary, 2009, pp. 138--143.

\bibitem{soudbakhsh2013parallelized}
D.~Soudbakhsh and A.~Annaswamy, ``Parallelized model predictive control,'' in
  \emph{Proceedings of the American Control Conference}, Washington, DC, USA,
  2013, pp. 1715--1720.

\bibitem{ZhuParallelNP}
Y.~Zhu and C.~Laird, ``A parallel algorithm for structured nonlinear
  programming,'' in \emph{5th International Conference on Foundations of
  Computer-Aided Process Operations}, vol.~5, 2008, pp. 345--348.

\bibitem{reuterswardLic}
P.~Reutersw{\"a}rd, \emph{\BIBforeignlanguage{eng}{Towards Pseudospectral
  Control and Estimation}}, ser. ISSN 0280--5316, 2012, licentiate Thesis.

\bibitem{o2012splitting}
B.~O'Donoghue, G.~Stathopoulos, and S.~Boyd, ``A splitting method for optimal
  control,'' in \emph{IEEE Transactions on Control Systems Technology},
  vol.~21, no.~6.\hskip 1em plus 0.5em minus 0.4em\relax IEEE, 2013, pp.
  2432--2442.

\bibitem{stathopoulos2013hierarchical}
G.~Stathopoulos, T.~Keviczky, and Y.~Wang, ``A hierarchical time-splitting
  approach for solving finite-time optimal control problems,'' in
  \emph{Proceedings of the European Control Conference}, Zurich, Switzerland,
  July 2013, pp. 3089--3094.

\bibitem{poloni13:par_mhe}
T.~Poloni, B.~Rohal'-Ilkiv, and T.~Johansen, ``Parallel numerical optimization
  for fast adaptive nonlinear moving horizon estimation,'' in \emph{10th IEEE
  International Conference on Networking, Sensing and Control (ICNSC)}, April
  2013, pp. 40--47.

\bibitem{nielsen14:parallel_mpc_ifac}
I.~Nielsen and D.~Axehill, ``An {O}(log {N}) parallel algorithm for {N}ewton
  step computation in model predictive control,'' in \emph{Proceedings of the
  19th {IFAC} World Congress}, Cape Town, South Africa, Aug. 2014, pp.
  10\,505--10\,511.

\bibitem{nielsen14_parallel_mpc_arxiv}
------, ``An {O}(log {N}) parallel algorithm for {N}ewton step computation in
  model predictive control,'' \emph{arXiv preprint arXiv:1401.7882}, 2014.

\bibitem{nielsen15:parallel_factorization_unp}
------, ``A parallel structure-exploiting factorization algorithm with
  applications to model predictive control,'' Dec. 2015, accepted for
  publication at the 54th {IEEE} Conference on Decision and Control.

\bibitem{sina15:distributed_message_passing}
S.~Khoshfetrat~Pakazad, A.~Hansson, and M.~Andersen, ``Distributed primal-dual
  interior-point methods for solving loosely coupled problems using message
  passing,'' \emph{arXiv:1502.06384v2}, 2015.

\bibitem{nielsen15:licthesis}
I.~Nielsen, \emph{On Structure Exploiting Numerical Algorithms for Model
  Predictive Control}, ser. Link\"{o}ping Studies in Science and Technology.
  Thesis, 2015, no. 1727.

\bibitem{boyd04:_convex_optim}
S.~Boyd and L.~Vandenberghe, \emph{Convex Optimization}.\hskip 1em plus 0.5em
  minus 0.4em\relax Cambridge University Press, 2004.

\bibitem{nocedal06:num_opt}
J.~Nocedal and S.~Wright, \emph{Numerical Optimization}.\hskip 1em plus 0.5em
  minus 0.4em\relax Springer-Verlag, 2006.

\bibitem{axehill15:_contr_mpc}
D.~Axehill, ``Controlling the level of sparsity in {MPC},'' \emph{{Systems \&
  Control Letters}}, vol.~76, pp. 1--7, 2015.

\bibitem{axehill:_towar_mpc}
D.~Axehill and A.~Hansson, ``Towards parallel implementation of hybrid {MPC} --
  a survey and directions for future research,'' in \emph{Distributed Decision
  Making and Control}, ser. Lecture Notes in Control and Information Sciences,
  R.~Johansson and A.~Rantzer, Eds.\hskip 1em plus 0.5em minus 0.4em\relax
  Springer Verlag, 2012, vol. 417, pp. 313--338.

\end{thebibliography}

\end{document}